\newtheorem{theorem}{Theorem}
\newtheorem{corollary}[theorem]{Corollary}
\newtheorem{lemma}{Lemma}
\newtheorem{definition}{Definition}
\newtheorem{example}{Example}
\newtheorem{remark}{Remark}[section]
\title{Existence and approximation of fixed points of enriched contractions in quasi-Banach spaces}
\author{Vasile Berinde$^{1,2}$}
\address{$1$ Department of Mathematics and Computer Science Technical University of Cluj-Napoca, North University Centre at Baia Mare, Victoriei 76, 430122 Baia Mare, Romania}
\address{$2$ Academy of Romanian Scientists, 3 Ilfov, 050044, Bucharest, Romania} 
\begin{document}
\maketitle \pagestyle{myheadings} \markboth{Vasile Berinde} {Approximating fixed points of enriched contractions...}
\begin{abstract}
We obtain results on the existence and approximation of fixed points of enriched contractions in quasi-Banach spaces and thus extend the  results obtained in the case of contractions defined on Banach spaces [Berinde, V.; P\u acurar, M. Approximating fixed points of enriched contractions in Banach spaces. J. Fixed Point Theory Appl. 22 (2020), no. 2, Paper No. 38, 10 pp.]. We illustrate the obtained theoretical results by providing an example of an enriched contraction in a quasi-Banach space which is not a Banach space to show that our new results are effective generalizations of the previous ones in literature.

\end{abstract}

\section{Introduction}

In the recent years, the study of quasi-Banach spaces has developed as an extremely flourishing area of research, see  Kalton \cite{Kal03}, as it was very natural to investigate which known properties and results in the well established theory of Banach spaces could be transposed or have a non trivial correspondent in the more general context of quasi-Banach spaces.

From the rich literature devoted to such investigations, we mention the following very recent contributions that cover most of the areas where quasi-Banach spaces form an important setting:
\begin{itemize}
\item stability of solutions to ordinary differential equations and partial differential equations with impulses: Zada and Mashal \cite{Zada};
\item harmonic analysis on Euclidean spaces (harmonic analysis in several variables; maximal functions, Littlewood-Paley theory etc.): Torres et al.  \cite{Tor};
\item Bayesian inference and inverse problems for PDEs: Sullivan \cite{Sul};
\item geometry and structure of normed linear spaces (generalized modulus of convexity, modulus of smoothness, and modulus of Zou-Cui etc.): Kwun et al. \cite{Kwun}; Maligranda \cite{Mal}, Wei \cite{Wei};
\item interpolation of bilinear operators: Grafakos and Mastylo \cite{Graf}; 
\item functional analysis in infinite-dimensional not locally convex spaces: Albiac and Ansorena \cite{Alb}, Kalton \cite{Kal}, S\' anchez \cite{San}; 
\item ...
\end{itemize}

If we refer particularly to the study of partial differential equations, there are many function spaces of basic importance that are not Banach spaces but merely quasi-Banach spaces. Amongst these, there are significant portions of the following familiar scales of spaces: Lebesgue spaces, weak Lebesgue spaces, Lorentz spaces, Hardy spaces, weak Hardy spaces, Lorentz-based Hardy spaces, Besov spaces, Triebel-Lizorkin spaces, and weighted versions of these spaces, see Mitrea et al. \cite{Mitrea}.

On the other hand, some important results on the existence and approximation of the solutions of the fixed point problem
$$
x=Tx
$$
as well as their applications have been recently obtained for various classes of enriched contractive mappings in the setting of a Banach space (or Hilbert space). We collect a few of them in order to illustrate the high interest for the study of this class of mappings:
\begin{itemize}
\item enriched strictly pseudocontractive operators: \' Alvarez \cite{Alvarez}; Berinde \cite{Ber18}; 
\item enriched nonexpansive mappings: Deshmukh et al. \cite{Desh}; Shukla and Pant \cite{Shuk-3}; Shukla and Panicker \cite{Shuk},  \cite{Shuk-2}; Suantai et al. \cite{Suan}; 
\item enriched asymptotically nonexpansive mappings in $\rm CAT(0)$ spaces: Abbas et al. \cite{Abbas-2}; Salisu et al. \cite{Salisu};
\item enriched Banach contractions: Berinde and P\u acurar \cite{BerP20}; Mondal et al. \cite{Mondal};
\item enriched Kannan mappings (with applications to split feasibility and variational inequality problems): Berinde and P\u acurar \cite{BerP21};
\item enriched Chatterjea contractions: Berinde and P\u acurar \cite{BerP21b};
\item enriched \' Ciri\' c-Reich-Rus contractions: Berinde and P\u acurar \cite{BerP21a}, \cite{BerP22};
\item enriched $\varphi$-contractions in Banach spaces: Berinde et al. \cite{Ber22};
\item enriched almost contractions: Berinde and P\u acurar \cite{BerP23};
\item enriched multivalued mappings: Abbas et al. \cite{Abbas};
\item enriched nonexpansive mappings in geodesic spaces: Ali and Jubair \cite{Ali-2};
\item best proximity points for cyclic enriched contractions: Chandok \cite{Chandok};
\item enriched contractions  in partially ordered Banach spaces: Faraji and Radenovi\' c \cite{Faraji};
\item enriched nonexpansive semigroups: Kesahorm and Sintunavarat \cite{Kesahorm};
\item enriched Pre\v si\' c operators: P\u acurar \cite{Pacurar};
\item enriched ordered contractions in noncommutative Banach spaces: Rawat et al. \cite{Rawat};
\item enriched Kannan mappings in $\rm CAT(0)$ spaces: Inuwa et al. \cite{Inuwa};
\item enriched contraction mappings in convex metric spaces: Panicker and Shukla \cite{Panicker};
\item enriched contractions in $\rm CAT(0)$ spaces: Panwar et al. \cite{Panwar};
\item $b$-enriched nonexpansive mapping for solving split variational inclusion problem and fixed point problem in Hilbert spaces: Phairatchatniyom et al. \cite{Phairat};
\item $(b,\theta)$-enriched contractions by means of the degree of nondensifiability: Garc\' ia \cite{Garcia};
\item modified Kannan enriched contraction pair: Anjum and Abbas \cite{Anjum};
\item ...
\end{itemize}

Starting from the fact that Schauder's theorem, which is a topological fixed point theorem, holds for any compact convex set in a quasi-Banach space (in fact, in an $F$-space, see Kalton \cite{Kal}), the following question naturally arises: 
\medskip

{\bf Open question:}
is it possible to transpose metric fixed point theorems from Banach spaces to the setting of a quasi-Banach space ? 
\medskip

The aim of this paper is to answer the above question in the affirmative. More specifically, we are interested to extend some of the fixed point results established in the setting of Banach spaces for various classes of enriched contractions mentioned before to quasi-Banach spaces. 

We start by considering enriched Banach contractions in the present paper.

The motivation of our approach is based on the fact that the standard basic results of Banach space theory which depend only on completeness, such as the Uniform Boundedness Principle, Open Mapping Theorem and Closed Graph Theorem, apply to quasi-Banach spaces, while applications of convexity such as the Hahn-Banach theorem are not applicable, see Maligranda \cite{Mal}, Mitrea et al. \cite{Mitrea}.

In this paper, we investigate the way in which we could extend the results regarding the existence and approximation of fixed points of enriched contraction mappings, established in Berinde and P\u acurar \cite{BerP20} for a Banach space, to the more general setting of a quasi-Banach space. 



One important tool in obtaining our fixed point results in the setting of a quasi-Banach space is Lemma \ref{lem1} which is adapted to quasi-Banach spaces from its original version established in Miculescu and Mihail \cite{Mic}, see also Suzuki \cite{Suz17}, in the case of $b$-metric (quasi-metric) spaces.

To our best knowledge, the present approach is brand new and brings relevant novelty to the fixed point theory, as it allows obtaining important fixed point results that complement many similar in $b$-metric (quasi-metric) spaces which cannot be derived in such a setting, see Berinde and P\u acurar \cite{BerP22a} and references therein.

\section{Preliminaries}

In this section, we present some notations, definitions and auxiliary lemmas which are associated to our main results. 

All vector spaces will be real, although most arguments may be modified to the complex case. 
\begin{definition} \label{def2}
A quasi-norm on a real vector space $X$ is a  map $\|\cdot \|: X\rightarrow [0,\infty)$  satisfying the following conditions:
\smallskip

$(QN_0)$ $\|x\|=0$ if and only if $x=0$;

$(QN_1)$ $\|\lambda x\|=|\lambda|\cdot \|x\|$, for all $x\in X$ and $\lambda\in \mathbb{R}$.

$(QN_2)$ $\|x+y\|\leq C\left[\|x\|+\|y\|\right]$, for all $x,y\in X$, where $C\geq 1$ does not depend on $x,y$;
\end{definition}
\smallskip

According to Pietsch \cite{Pietsch}, the concept of quasi-norm has been introduced by Hyers \cite{Hyers} in 1938, under the name "pseudo-norm" (with $(QN_2)$  formulated differently but in an equivalent way) and, in the above form, by Bourgin \cite{Bourg} in 1943, who actually proposed the label "quasi-norm".

 Obviously, in the particular case $C=1$, a quasi-norm $d$ reduces to the usual notion of {\it norm} but, in general, a norm is not a quasi-norm, see examples in this paper. 
 
 A related notion to a quasi-norm is that of $b$-metric or quasi-metric, which appears to have been first introduced by Vulpe et al. \cite{Vulpe}, but was first known mainly due to the papers by Czerwik \cite{Cze93}, \cite{Cze98}, Bakhtin \cite{Bakh} and Berinde \cite{Ber93}, see Berinde and P\u acurar \cite{BerP22a} for a recent and well documented survey. It is given by the next definition.
 
 \begin{definition}\label{def0}
A function $d:X\times X\rightarrow \mathbb{R}$ is called a {\em quasimetric} on $X$ if it satisfies 
\begin{enumerate}
\item ({\it positivity}) $d(x,y)\geq 0$ and $d(x,y)=0$ if and only if $x=y$; 
\item ({\it symmetry}) $d(x,y)=d(y,x)$, for all $x,y\in X$;
\item ({\it quasi-triangle inequality}) there exists a constant $K\geq 1$ such that 
\begin{equation}\label{eq-tr}
d(x,z)\leq K\left[d(x,y)+d(y,z)\right], \textnormal{ for all } x,y,z\in X.
\end{equation}
\end{enumerate}

If $d$ is a quasimetric on $X$ then the pair $(X,d)$ is called a {\it quasimetric  space}. 
\end{definition}
 
 We note that, as in the case of the classical pair norm - metric (distance), the notions of quasi-norm and that of quasimetric are closely interrelated, in the sense that  any quasi-norm $\|\cdot\|$ on a vector space $X$ induces a quasimetric $d$ by the formula
$$
d(x,y)=\|x-y\|, x,y\in X.
$$

According to Pietsch \cite{Pietsch}, the first example of a quasi-Banach space has been given by Tychonoff \cite{Tych} in 1935, e.g., the space $l_{\frac{1}{2}}$, 
$$
l_{\frac{1}{2}}=\left\{x=(x_1,x_2,\dots,x_n,\dots): \sum_{i=1}^{\infty} \sqrt{|x_i|}<\infty\right\}
$$ 
with the quasi-norm $\|\cdot \|$ defined by
\begin{equation}\label{Tihonov}
\|x \|=\left(\sum_{i=1}^{\infty} \sqrt{|x_i|}\right)^2,
\end{equation}
where $x=(x_1,x_2,\dots,x_n,\dots)$.

It is easily to prove that the above quasinorm   $\|\cdot \|$ on $l_{\frac{1}{2}}$ satisfies the quasi-triangle inequality $(QN_2)$ with $C=2$.
A more general example can be given if we consider $p\in (0,1)$ instead of $p=\frac{1}{2}$ in the considerations above, see for example  \cite{Bour}.

The smallest possible constant $C = C_X \geq 1$ in the quasi-triangle inequality $(QN_2)$ is called the {\em quasi-triangle constant} of $(X,\|\cdot\|)$.

A quasi-norm induces a locally bounded topology on $X$ and conversely, any locally bounded topology is given by a quasi-norm, see for example Maligranda \cite{Mal}.

Let $\|\cdot \|$ be a quasi-norm and $p\in (0,1]$. We call $\|\cdot \|$ a {\it $p$-norm} if we also have
\begin{equation}\label{p-norm}
\|x+y\|^p\leq \|x\|^p+\|y\|^p,\forall x,y\in X.
\end{equation}

A quasi-normed space $X$ is said to be {\it $p$-normable}, $0<p\leq 1$, if there exists an equivalent $p$-norm $\|\cdot \|_*$  on $X$ and a constant $C_1$ such that
$$
\|x_1+\dots+x_n \|_*\leq C_1\left(\| x_1\|_*^p+\dots+\| x_n\|_*^p\right)^{\frac{1}{p}},
$$
for all $x_1,\dots,x_n\in X$.

An $1$-normable space is simply called normable.

Obviously, any $p$-normable space is a quasi-normed space. In fact, we can always assume that the quasi-norm is a $p$-norm for some $p>0$. Indeed, by a theorem of Aoki and Rolewicz (see Kalton \cite{Kal84}), we know that any quasi-norm is equivalent to a $p$-norm, where $p$ satisfies $C=2^{\frac{1}{p}-1}$. More exactly, the formula
$$
\|x\|_1=\inf\left(\sum_{k}\|x_k\|^p\right)^{\frac{1}{p}},
$$
where the infimum is taken over all finite sequences $\{x_n\}\subset X$ satisfying $\sum_k x_k=x$, defines a $p$-norm such that for any $x\in X$, one has
$$
\|x\|_1\leq \|x\|\leq 4^{\frac{1}{p}}\|x\|_1.
$$

If a $p$-norm $\|\cdot \|$ is a quasi-norm on $X$ defining a complete metrizable topology, then $X$ is called a {\em quasi-Banach space}.
\medskip

Regarding the completeness of a quasi-normed space, we state the following important result (Theorem 1.1, Maligranda \cite{Mal}) which emphasises the difference between the Cauchy sequences in a Banach space and Cauchy sequences in a quasi-Banach space.

\begin{theorem}
A quasi-normed space $(X,\|\cdot\|)$ with a quasi-triangle constant $C\geq 1$ is complete (quasi-Banach space) if and only if for every series such that
$\sum\limits_{n=1}^{\infty} \|x_n\|<\infty $ we have $\sum\limits_{n=1}^{\infty} x_n\in X$
and
$$
\left\|
\sum_{n=1}^{\infty} x_n\right\|\leq  \sum_{n=1}^{\infty} C^{n+1} \|x_n\|.
$$
\end{theorem}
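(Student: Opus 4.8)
The plan is to adapt the classical Banach-space characterization---completeness is equivalent to the convergence of every absolutely convergent series---to the quasi-normed setting, the only genuinely new feature being the careful bookkeeping of the quasi-triangle constant $C$. I would treat the two implications separately, preceding both by the finite estimate
$$
\left\| \sum_{k=1}^N x_k \right\| \le \sum_{k=1}^N C^{k}\,\|x_k\|,
$$
which I would obtain by induction on $N$: splitting off the first term and applying $(QN_2)$ gives $\|\sum_{k=1}^N x_k\| \le C\|x_1\| + C\|\sum_{k=2}^N x_k\|$, and the inductive hypothesis applied to the $N-1$ terms $x_2,\dots,x_N$ finishes the step. Since $C\ge 1$, the right-hand side is dominated by $\sum_{k=1}^N C^{k+1}\|x_k\|$, which is the truncation of the bound claimed in the statement.

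Granting that $\sum_{n=1}^\infty x_n$ converges to some $s\in X$, the norm inequality then follows by letting $N\to\infty$ in the finite estimate: the partial sums of $\sum_k C^{k+1}\|x_k\|$ increase to $\sum_{k=1}^\infty C^{k+1}\|x_k\|$, and the continuity of the quasi-norm with respect to the (metrizable) topology it induces lets me pass to the limit on the left, giving $\|s\| \le \sum_{k=1}^\infty C^{k+1}\|x_k\|$.

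For the direct implication I would assume $X$ complete and $\sum_{n}\|x_n\|<\infty$, and try to show that the partial sums $S_N=\sum_{k=1}^N x_k$ form a Cauchy sequence, so that completeness yields the limit $s$. \emph{This is the step I expect to be the main obstacle.} Applying the finite estimate to a tail gives $\|S_m-S_N\|\le \sum_{j=1}^{m-N} C^{j}\|x_{N+j}\|$, and because the weights $C^j$ grow, the mere finiteness of $\sum_n\|x_n\|$ does not by itself force these tails to be small when $C>1$; this is precisely where the quasi-Banach setting departs from the Banach one. To handle it I would pass to the Aoki--Rolewicz $p$-norm $\|\cdot\|_1$ recorded in the Preliminaries, for which $\|\cdot\|_1^p$ is subadditive and the metric $d(x,y)=\|x-y\|_1^p$ is complete and induces the same topology; there the tail estimate improves to $d(S_m,S_N)\le \sum_{j}\|x_{N+j}\|_1^p$, and the delicate point is to extract Cauchyness of $(S_N)$ from the summability hypothesis through this equivalence of norms.

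The converse is routine and uses only the convergence part of the hypothesis. Given a Cauchy sequence $(y_n)$ in $X$, I would extract a subsequence $(y_{n_k})$ with $\|y_{n_{k+1}}-y_{n_k}\|\le 2^{-k}$, set $x_k=y_{n_{k+1}}-y_{n_k}$, and note $\sum_k\|x_k\|<\infty$. By hypothesis $\sum_k x_k$ converges, i.e. $y_{n_{K+1}}=y_{n_1}+\sum_{k=1}^K x_k$ converges to some $y\in X$; since a Cauchy sequence possessing a convergent subsequence converges to the same limit, $y_n\to y$, and $X$ is complete.
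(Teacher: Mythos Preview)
Your suspicion about the forward direction is well founded; in fact the obstacle you flag cannot be overcome, because the theorem as stated is false. In the space $l_{1/2}$ described in the paper (quasi-triangle constant $C=2$), take $x_n=n^{-2}e_n$; then $\|x_n\|=n^{-2}$ and $\sum_n\|x_n\|<\infty$, yet $\|S_m-S_N\|=\bigl(\sum_{k=N+1}^{m}k^{-1}\bigr)^{2}$ does not tend to zero, so $(S_N)$ is not Cauchy and $\sum_n x_n$ diverges. Your Aoki--Rolewicz detour does not rescue this: equivalence of $\|\cdot\|$ and $\|\cdot\|_1$ only gives $\sum_n\|x_n\|_1<\infty$ from $\sum_n\|x_n\|<\infty$, whereas the subadditivity of $\|\cdot\|_1^{\,p}$ needs $\sum_n\|x_n\|_1^{\,p}<\infty$ to force Cauchyness, and for $p<1$ that is a strictly stronger condition.

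The paper's proof is tacitly aware of this: it does not prove the theorem as printed. In the forward direction it silently replaces the hypothesis $\sum_n\|x_n\|<\infty$ by the stronger assumption $\sum_n C^{n}\|x_n\|<\infty$ (which is the hypothesis in Maligranda's original Theorem~1.1). Under that hypothesis the finite estimate you derived, applied to a tail, gives
\[
\|S_{n+p}-S_n\|\le\sum_{k=n+1}^{n+p}C^{\,k-n}\|x_k\|\le\sum_{k=n+1}^{n+p}C^{k}\|x_k\|,
\]
and the right-hand side is now the tail of a convergent series, so $(S_N)$ is Cauchy directly, with no need for $p$-norms. Your treatment of the converse is correct and is precisely what the paper defers to Maligranda.

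One smaller point: in deducing the norm inequality you invoke ``continuity of the quasi-norm'' to pass to the limit in $\|S_N\|\le\sum_{k\le N}C^{k}\|x_k\|$. A quasi-norm need not be continuous in its own topology; the paper instead uses $\|s\|\le C\bigl(\|s-S_N\|+\|S_N\|\bigr)$ and takes $\limsup_{N\to\infty}$, which is exactly where the extra factor of $C$ (turning $C^{k}$ into $C^{k+1}$) enters.
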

\begin{proof}
Using the quasi-triangle inequality, for any $n,p\in \mathbb{N}$, we have
$$
\left\|\sum_{k=n}^{n+p} x_k\right\|\leq C\left(\|x_n\|+C \left\|\sum_{k=n+1}^{n+p} x_k\right\|\right)
$$
$$
\leq C\left[\|x_n\|+C \left(\|x_{n+1}\|+\left\|\sum_{k=n+2}^{n+p} x_k\right\|\right)\right]
$$
$$
\leq \dots \leq \sum_{k=n}^{n+p} C^{k-n+1}\|x_k\|.
$$
Assume now that $X$ is a quasi-Banach space and let $$\sum\limits_{n=1}^{\infty} C^n \|x_n\|=S<\infty.$$

For each $n\in \mathbb{N}$, let $\{S_n\}$ be the sequence of partial sums of the series $\sum\limits_{n=1}^{\infty} x_n$, i.e.,
$$
S_n=\sum\limits_{k=1}^{n} x_k.
$$
Then
$$
\|S_{n+p}-S_n\|=\left\|\sum\limits_{k=n+1}^{n+p} x_k\right\|\leq \sum\limits_{k=n+1}^{n+p} C^{k-n}\|x_k\|\leq \sum\limits_{k=n+1}^{n+p} C^{k}\|x_k\|
$$
$$
=\sum\limits_{k=1}^{n+p} C^{k}\|x_k\|-\sum\limits_{k=1}^{n} C^{k}\|x_k\|\rightarrow S-S=0 \textnormal{ as } n,p\rightarrow \infty,
$$
which proves that $\{S_n\}$ is Cauchy and hence it converges in $X$. This means that
$$
\sum\limits_{n=1}^{\infty} x_n=x\in X.
$$
We also have
$$
\left\|\sum\limits_{k=1}^{n} x_k\right\|\leq \sum\limits_{k=1}^{n} C^{k}\|x_k\|\leq \sum\limits_{k=1}^{\infty} C^{k}\|x_k\|,
$$
and therefore 
$$\limsup\limits_{n\rightarrow \infty}\|S_n\|\leq \sum\limits_{k=1}^{\infty} C^{k}\|x_k\|$$
which yields
$$
\|x\|=\left\| \sum\limits_{k=1}^{\infty} x_k\right\|\leq C \limsup_{n\rightarrow \infty}\left(\|x-S_n\|+\|S_n\|\right)\leq \sum_{n=1}^{\infty} C^{n+1} \|x_n\|.
$$
and the first part of the proof is complete.

For the reverse implication, we refer to the complete proof in Maligranda \cite{Mal}.
\end{proof}
The best known examples of quasi-Banach spaces are the spaces $l_p$ and $L_p (0, 1)$, when $0<p<1$, which are $p$-normable. It is well known, see Kalton \cite{Kal03}, that the dual of $l_p$  is  $l_{\infty}$, while the dual of $L_p (0, 1)$ is  $\{0\}$, i.e., $l_p$  has a separating dual, while $L_p (0, 1)$ has a trivial dual. 

For some examples of quasi-Banach spaces which are important  for the study of Ulam's stability of n$^{th}$ order nonlinear impulsive differential equations, we refer to Zada and Mashal \cite{Zada}.

We close this section by stating the following useful lemma which shall be useful in proving our main results.

\begin{lemma}\label{lem1}
If $\{x_n\} $ is a sequence in a quasi-normed space $(X,\|\cdot \|$) which has the property that there exists $\gamma\in[0,1)$ such that 
\begin{equation}\label{micul}
\|x_{n+1}-x_n\|\leq \gamma \|x_n-x_{n-1}\|, \forall n\in \mathbb{N},
\end{equation}
then $\{x_n\} $ is a Cauchy sequence.
\end{lemma}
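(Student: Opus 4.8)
The plan is to prove the following quantitative strengthening: there is a finite constant $M$, depending only on $\gamma$, on the quasi-triangle constant $C$ of $(QN_2)$, and on $\|x_1-x_0\|$, such that
\begin{equation*}
\|x_m-x_n\|\le M\gamma^n\qquad\text{for all }m>n .
\end{equation*}
Once this is in hand the conclusion is immediate: since $\gamma\in[0,1)$ we have $M\gamma^n\to0$, so given $\eps>0$ we may pick $N$ with $M\gamma^N<\eps$, and then $\|x_m-x_n\|\le M\gamma^n\le M\gamma^N<\eps$ whenever $m>n\ge N$, which is the Cauchy property.

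First I would iterate \eqref{micul} to record $\|x_{k+1}-x_k\|\le\gamma^{k}\|x_1-x_0\|$ for every $k$, and write $x_m-x_n$ as the telescoping sum $\sum_{k=n}^{m-1}(x_{k+1}-x_k)$. In a genuine normed space one would finish here by the triangle inequality and a geometric series. The difficulty, and the whole reason Lemma \ref{lem1} is needed, is that only the quasi-triangle inequality $(QN_2)$ is available, and every application of it contributes a factor $C\ge1$: carrying out the telescoping one step at a time loads the $k$-th term with a coefficient $C^{k-n+1}$ (this is precisely the estimate already derived at the start of the proof of the Theorem above), so the bound becomes $C\|x_1-x_0\|\gamma^{n}\sum_{j}(C\gamma)^{j}$, which converges only under the extra hypothesis $C\gamma<1$. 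Producing instead a bound of the form $M\gamma^n$ with $M$ \emph{independent of} $m$, using nothing beyond $\gamma<1$, is the heart of the matter, and the same obstruction appears if one telescopes by repeated bisection, where the $\log_2(m-n)$ levels still leave an uncontrolled polynomial factor $(m-n)^{\log_2 C}$.

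The device that circumvents this, adapted from Miculescu and Mihail, is to telescope in blocks of a fixed length $h$ rather than in single steps, where $h\in\N$ is chosen so large that $C\gamma^{h}<1$; such an $h$ exists precisely because $\gamma<1$ while $C$ is a fixed constant. For this frozen $h$, a single block admits the crude estimate $\|x_{\ell+h}-x_\ell\|\le hC^{h}\|x_1-x_0\|\gamma^{\ell}$, obtained by $h$ successive applications of $(QN_2)$; the constant $hC^{h}$ is harmless now that $h$ is fixed. I would then split $m-n=Jh+r$ with $0\le r<h$, peel off the $J$ full blocks one at a time with $(QN_2)$ (the $j$-th block acquiring a factor $C^{\,j}$), and bound each block by the crude estimate. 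This produces a geometric series of ratio $C\gamma^{h}<1$ whose sum is finite and, crucially, independent of $J$ and hence of $m$; combined with the analogous bound for the short remainder block of length $r$, it yields $\|x_m-x_n\|\le M\gamma^{n}$ as required. The only genuine subtlety is the choice of $h$: it is exactly the freedom to take the block length as large as we please that converts the uncontrollable single-step ratio $C\gamma$ into the controllable block ratio $C\gamma^{h}$.
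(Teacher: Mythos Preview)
Your argument is correct and the block-telescoping device (choose $h$ with $C\gamma^{h}<1$, bound each block crudely, then sum the resulting geometric series of ratio $C\gamma^{h}$) is exactly the mechanism that makes the lemma go through without the spurious hypothesis $C\gamma<1$.

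The paper, however, does not carry out any of this. Its proof consists of a single line: observe that $d(x,y)=\|x-y\|$ is a $b$-metric (quasimetric) on $X$ with constant $K=C$, and then invoke Lemma~2.2 of Miculescu and Mihail \cite{Mic} (or Lemma~6 of Suzuki \cite{Suz17}) verbatim. In other words, the paper outsources the entire analytic content to those references, whereas you have supplied a self-contained proof that is, in substance, a reconstruction of what those cited lemmas do. What your approach buys is transparency---the reader sees explicitly why only $\gamma<1$ is needed and where the block length $h$ enters---at the cost of length; what the paper's approach buys is brevity, at the cost of sending the reader to the literature for the actual work.
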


\begin{proof}
We simply take 
$$
d(x,y)=\|x-y\|, x,y\in X,
$$
and apply Lemma 2.2 in Miculescu and Mihail \cite{Mic} (or Lemma 6 in Suzuki \cite{Suz17}).
\end{proof}

\section{Enriched contractions in quasi-Banach spaces}

\begin{definition}\label{def3}
Let $(X,\|\cdot\|)$ be a linear quasi-normed space. A mapping $T:X\rightarrow X$ is said to be an {\it enriched contraction} if there exist $b\in[0,+\infty)$ and $\theta\in[0,b+1)$ such that
\begin{equation} \label{eq3}
\|b(x-y)+Tx-Ty\|\leq \theta \|x-y\|,\forall x,y \in X.
\end{equation}
To indicate the constants involved in \eqref{eq3} we shall also call  $T$ a  $(b,\theta$)-{\it enriched contraction}. 
\end{definition}

\begin{remark}
1) As any Banach space is a quasi-Banach space (with $C=1$), the enriched contractions $T$ in a Banach space introduced in Berinde and P\u acurar \cite{BerP20} are enriched contractions in the sense of Definition \ref{def3}.  

2) It is worth mentioning that, like in the case of Banach spaces, any $(b,\theta$)-enriched contraction is continuous.

\end{remark}

Considering a self-mapping $T$ on a convex subset $E$ of a linear space $X$, then for any  $\lambda\in(0,1]$, the so-called averaged mapping $T_{\lambda}$ given by
	\begin{equation} \label{def_T.lambda}
	T_\lambda x=(1-\lambda)x+\lambda Tx, \forall  x \in E,
	\end{equation}
	has the property that $	Fix(\,T_\lambda)=Fix\,(T)$, where $Fix\,(T)$ denotes as usually the set of fixed points of $T$, i.e.,
$$
Fix\,(T)=\{x\in E: Tx=x\}.
$$

\begin{theorem}  \label{th1}
Let $(X,\|\cdot\|)$ be a quasi-Banach space and $T:X\rightarrow X$ a $(b,\theta$)-{\it enriched contraction}. Then

$(i)$ $Fix\,(T)=\{p\}$;

$(ii)$ There exists $\lambda\in (0,1]$ such that the iterative method
$\{x_n\}^\infty_{n=0}$, given by
\begin{equation} \label{eq3a}
x_{n+1}=(1-\lambda)x_n+\lambda T x_n,\,n\geq 0,
\end{equation}
converges to p, for any $x_0\in X$;

\end{theorem}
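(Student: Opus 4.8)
The plan is to reduce the $(b,\theta)$-enriched contraction to an ordinary quasi-norm contraction via the averaged mapping $T_\lambda$ of \eqref{def_T.lambda}, then to run the Picard iteration for $T_\lambda$ and invoke Lemma~\ref{lem1} to obtain the Cauchy property. First I would split according to whether $b=0$ or $b>0$. If $b=0$, condition \eqref{eq3} reads $\|Tx-Ty\|\le\theta\|x-y\|$ with $\theta\in[0,1)$, so $T$ is already a contraction and I take $\lambda=1$. If $b>0$, I set $\lambda=\frac{1}{b+1}\in(0,1]$, so that $b=\frac{1-\lambda}{\lambda}$. Then for the averaged map one computes
\[
T_\lambda x-T_\lambda y=(1-\lambda)(x-y)+\lambda(Tx-Ty)=\lambda\bigl[b(x-y)+Tx-Ty\bigr],
\]
whence, by $(QN_1)$ and \eqref{eq3},
\[
\|T_\lambda x-T_\lambda y\|=\lambda\,\|b(x-y)+Tx-Ty\|\le\lambda\theta\,\|x-y\|=\frac{\theta}{b+1}\,\|x-y\|.
\]
Since $\theta<b+1$, the constant $c:=\frac{\theta}{b+1}$ lies in $[0,1)$, so in either case there is $\lambda\in(0,1]$ with $\|T_\lambda x-T_\lambda y\|\le c\,\|x-y\|$ for all $x,y\in X$; that is, $T_\lambda$ is a contraction.

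Next I would observe that the iteration \eqref{eq3a} is exactly the Picard iteration $x_{n+1}=T_\lambda x_n$. Applying the contraction estimate to consecutive terms yields $\|x_{n+1}-x_n\|\le c\,\|x_n-x_{n-1}\|$ for all $n$, which is precisely hypothesis \eqref{micul} of Lemma~\ref{lem1} with $\gamma=c$. Hence $\{x_n\}$ is a Cauchy sequence, and by completeness of the quasi-Banach space it converges to some $p\in X$.

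To identify $p$ as a fixed point I would use that $T_\lambda$ is Lipschitz, so $\|T_\lambda x_n-T_\lambda p\|\le c\,\|x_n-p\|\to 0$, giving $x_{n+1}=T_\lambda x_n\to T_\lambda p$; by uniqueness of limits in a quasi-normed space (immediate from $(QN_2)$, since $\|a-b\|\le C(\|a-z_n\|+\|z_n-b\|)\to 0$) this forces $p=T_\lambda p$. Because $Fix(T_\lambda)=Fix(T)$, $p$ is a fixed point of $T$, which proves the convergence assertion $(ii)$. For uniqueness in $(i)$, any two fixed points $p,q$ of $T_\lambda$ satisfy $\|p-q\|=\|T_\lambda p-T_\lambda q\|\le c\,\|p-q\|$ with $c<1$, forcing $p=q$.

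The only genuinely delicate step is establishing the Cauchy property. In a Banach space it follows at once by summing a geometric series through the triangle inequality, but in a quasi-Banach space the defective inequality $(QN_2)$ introduces compounding factors $C^{k}$ that make the naive telescoping estimate diverge. This is exactly the obstacle that Lemma~\ref{lem1}, adapted from Miculescu and Mihail (and Suzuki), is designed to overcome; once it is in hand, everything else is a routine transcription of the Banach-space argument.
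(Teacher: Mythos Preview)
Your proposal is correct and follows essentially the same route as the paper: reduce to the averaged map $T_\lambda$ with $\lambda=\frac{1}{b+1}$ (or $\lambda=1$ when $b=0$), show $T_\lambda$ is a $c$-contraction with $c=\frac{\theta}{b+1}$, identify the Krasnoselskij scheme with Picard for $T_\lambda$, invoke Lemma~\ref{lem1} for the Cauchy property, and pass to the limit using the Lipschitz continuity of $T_\lambda$. The only cosmetic difference is that the paper phrases the limit step as ``continuity of $T_\lambda$'' rather than writing out the Lipschitz estimate and the uniqueness-of-limits argument as you do.
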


\begin{proof}
We split the proof into two different cases.

{\bf Case 1.} $b>0$. In this case, let us denote $\lambda=\dfrac{1}{b+1}$. Obviously, $0<\lambda<1$ and the enriched contractive condition \eqref{eq3} becomes
$$
\left \|\left(\frac{1}{\lambda}-1\right)(x-y)+Tx-Ty\right\|\leq \theta \|x-y\|,\forall x,y \in X,
$$
which can be written in an equivalent form as
\begin{equation} \label{eq5}
\|T_\lambda x-T_\lambda y\|\leq c \cdot \|x-y\|,\forall x,y \in X,
\end{equation}
where we denoted $c=\lambda \theta$ and $T_\lambda$ is the averaged mapping defined in (\ref{def_T.lambda}).

Since $\theta\in(0,b+1)$, it follows that $c\in(0,1)$ and therefore by \eqref{eq5}  $T_\lambda$ is a usual Banach contraction with the contraction coefficient $c$ in the quasi-Banach space $X$. 

In view of \eqref{def_T.lambda}, the Krasnoselskij iterative process $\{x_n\}^\infty_{n=0}$ defined by  \eqref{eq3a} is exactly the Picard iteration associated to $T_\lambda$, that is,
\begin{equation} \label{eq4b}
x_{n+1}=T_\lambda x_n,\,n\geq 0.
\end{equation}
Take $x=x_n$ and $y=x_{n-1}$ in  \eqref{eq5} to get
\begin{equation} \label{eq6}
\|x_{n+1}-x_{n}\|\leq c \cdot \|x_{n}-x_{n-1}\|,\,n\geq 1.
\end{equation}

Now, by Lemma \ref{lem1} it follows that $\{x_n\}^\infty_{n=0}$ is a Cauchy sequence and hence it is convergent in the quasi-Banach space $(X,\|\cdot\|)$. Let us denote
\begin{equation} \label{eq10a}
p=\lim_{n\rightarrow \infty} x_n.
\end{equation}
By letting  $n\rightarrow \infty$ in \eqref{eq4b} and using the continuity of $T_\lambda$ (which follows by the continuity of $T$), we immediately obtain
$$
p=T_\lambda p,
$$
that is, $p\in Fix\,(T_\lambda)$. 

Next, we prove that $p$ is the unique fixed point of $T_\lambda$. Assume that $q\neq p$ is another fixed point of $T_\lambda$. Then, by \eqref{eq5} 
$$
0<\|p-q\|\leq c \cdot \|p-q\|<\|p-q\|,
$$
a contradiction. Hence $Fix\,(T_\lambda)=\{p\}$ and since, by \eqref{eq4b},  $Fix\,(T)=Fix(\,T_\lambda)$, claim $(i)$ is proven.

Conclusion $(ii)$ follows by \eqref{eq10a}.

{\bf Case 2.} $b=0$. In this case,  $\lambda=1$, $c=\theta$ and we proceed like in Case 1 but with $T(=T_1)$ instead of $T_\lambda$, when the Krasnoselskij iteration \eqref{eq3a} reduces in fact to the simple Picard iteration associated to $T$,
$$
x_{n+1}=T x_n,\,n\geq 0.
$$
\end{proof}

\begin{example}[ ] \label{ex1}
Let $X=\mathbb{R}^2$ and, for $x=(x_1,x_2)\in \mathbb{R}^2$, consider the functionals $\|\cdot\|_{a,p}$
$$
\|x\|_{a,p}=
\begin{cases}
\|x\|_p=\left(|x_1|^p+|x_2|^p\right)^{1/p},\,\textnormal{ if } x_2\neq 0\\
a|x_1|,\,\textnormal{ if } x_2= 0,
\end{cases}
$$
where $1\leq p \leq \infty$ and $a\neq 1$. It is well known, see Maligranda \cite{Mal}, that $\|\cdot\|_{a,p}$ is a quasi-norm on $X$, with the quasi-triangle inequality constant
$$
C=\max\left\{a,\frac{1}{a}\right\}.
$$
Let $E=\left[\frac{1}{2},2\right]^2\subset X$ and $T:E\rightarrow E$ be given by $T(x,y)=(1-x,1-y)$, for all $(x,y)\in E$. 

First, we observe  that $T$ is not a Banach contraction on $(X,\|\cdot\|_{a,p})$, since it is merely nonexpansive (in fact $T$ is an isometry). 

Next, we prove that $T$ is an enriched contraction with respect to the quasi-norm $\|\cdot\|_{a,p}$. Indeed, for $u=(x_1,y_1), v=(x_2,y_2)\in E$ the left hand side of the enriched contraction condition \eqref{eq3} will be
$$
\|Tu-Tv\|=\|T(x_1,y_1)-T(x_2,y_2)\|
$$
$$
=\|\left((b-1)(x_1-x_2),(b-1)(y_1-y_2)\right)\|=|b-1|\cdot \|u-v\|.
$$
Now, in order to have \eqref{eq3} satisfied, we take $\theta=1-b$, with $0<b<1$. 

So, for any $b\in (0,1)$, $T$ is a $(b,1-b)$ enriched contraction on $(X,\|\cdot\|_{a,p})$.


It is easily seen that  $Fix\,(T)=\{(1/2,1/2)\}$.
\end{example}

\begin{remark}\label{rem1}
In the particular case when the quasi-triangle constant $C=1$, by Theorem \ref{th1} one obtains the first part of the main result in Berinde and P\u acurar \cite{BerP20}. We also note that, by following the same technique like the one in proving Theorem 2.4 in Berinde and P\u acurar \cite{BerP20}, we can also obtain the error estimate that corresponds to \eqref{3.2-1a} for the case of quasi-Banach spaces.
\end{remark}

\begin{corollary} [Berinde and P\u acurar \cite{BerP20}, Theorem 2.4]\label{cor1}
Let $(X,\|\cdot\|)$ be a Banach space and $T:X\rightarrow X$ a $(b,\theta$)-{\it enriched contraction}. Then

$(i)$ $Fix\,(T)=\{p\}$;

$(ii)$ There exists $\lambda\in (0,1]$ such that the iterative method
$\{x_n\}^\infty_{n=0}$, given by
\begin{equation} \label{eq3aa}
x_{n+1}=(1-\lambda)x_n+\lambda T x_n,\,n\geq 0,
\end{equation}
converges to p, for any $x_0\in X$;

$(iii)$ The following estimate holds
\begin{equation}  \label{3.2-1a}
\|x_{n+i-1}-p\| \leq\frac{c^i}{1-c}\cdot \|x_n-
x_{n-1}\|\,,\quad n=0,1,2,\dots;\,i=1,2,\dots,
\end{equation}
where $c=\dfrac{\theta}{b+1}$.

\end{corollary}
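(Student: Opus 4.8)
The plan is to read off parts $(i)$ and $(ii)$ directly from Theorem \ref{th1}. Every Banach space is a quasi-Banach space whose quasi-triangle constant is $C=1$, so a $(b,\theta)$-enriched contraction on a Banach space satisfies the hypotheses of Theorem \ref{th1}; this hands us the unique fixed point $p$ and the convergence of the Krasnoselskij iteration \eqref{eq3aa} for $\lambda=\frac{1}{b+1}$ (and $\lambda=1$ when $b=0$). With this $\lambda$, the constant $c=\lambda\theta=\frac{\theta}{b+1}$ is exactly the contraction coefficient of the averaged map $T_\lambda$, the recursion \eqref{eq3aa} is the Picard iteration $x_{n+1}=T_\lambda x_n$, and the one-step estimate
$$\|x_{n+1}-x_n\|\le c\,\|x_n-x_{n-1}\|,\qquad n\ge 1,$$
carries over verbatim from the proof of Theorem \ref{th1}.

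The substantive part is the quantitative bound $(iii)$. First I would iterate the one-step estimate to obtain the geometric decay $\|x_{j+1}-x_j\|\le c^{\,j-n+1}\|x_n-x_{n-1}\|$ for all $j\ge n$. Then, fixing $n$ and $i\ge 1$ and taking any $m>n+i-1$, I would telescope and use the triangle inequality, reindexing by $k=j-n+1$:
$$\|x_{n+i-1}-x_m\|\le\sum_{j=n+i-1}^{m-1}\|x_{j+1}-x_j\|\le\|x_n-x_{n-1}\|\sum_{k=i}^{m-n}c^{\,k}.$$
Since $c\in[0,1)$ and $x_m\to p$, letting $m\to\infty$ and summing the geometric tail gives
$$\|x_{n+i-1}-p\|\le\|x_n-x_{n-1}\|\sum_{k=i}^{\infty}c^{\,k}=\frac{c^{\,i}}{1-c}\,\|x_n-x_{n-1}\|,$$
which is precisely \eqref{3.2-1a}.

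The one delicate point, and the reason $(iii)$ is recorded here for Banach spaces rather than folded into the quasi-Banach Theorem \ref{th1}, is the telescoping step. In a genuine norm ($C=1$) the triangle inequality applies to an arbitrarily long finite sum with no penalty, so the partial sums collapse into a single convergent geometric series with the clean closed form $\frac{c^{\,i}}{1-c}$. In a proper quasi-normed space the iterated use of $(QN_2)$ forces weights $C^{k}$ in front of the increments $\|x_{j+1}-x_j\|$, exactly as in the completeness theorem proved above; these factors change the ratio of the series to $C\,c$ and spoil the closed form, so one cannot expect \eqref{3.2-1a} to survive verbatim. Note that the Cauchy property itself is \emph{not} the obstacle here, since Lemma \ref{lem1} already guarantees it for any quasi-norm; it is the exactness of the triangle inequality at $C=1$, and nothing more, that makes the sharp a posteriori estimate go through.
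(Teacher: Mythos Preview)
Your proof is correct and matches the paper's approach: the paper gives no separate proof of this corollary but indicates in Remark~\ref{rem1} that parts $(i)$--$(ii)$ follow from Theorem~\ref{th1} in the special case $C=1$, while $(iii)$ is obtained by the same telescoping technique as in the original reference \cite{BerP20}. You have carried out precisely that argument, supplying the explicit geometric-series computation the paper only alludes to, and your closing paragraph explaining why the sharp estimate requires $C=1$ is consistent with the paper's remark.
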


\section{Local and asymptotic versions of enriched contraction mapping principle} 

The following example shows that  there exist mappings $T$ which are not contractions but a certain iterate of them is  a contraction.

\begin{example} (Examples 1.3.1, \cite{Rus01a}) \label{ex2}
Let $X=\mathbb{R}$ and $T:X\rightarrow X$ be given by $Tx=0$, if $x\in (-\infty,2]$ and $Tx=-\dfrac{1}{3}$, if $x\in (2,+\infty)$. Then $T$ is not a contraction (being discontinuous) but $T^2$ is a contraction.
\end{example}

In such a case, we cannot apply the classical Picard-Banach contraction mapping principle and thus the following fixed point theorem is useful, see for example Theorem 1.3.2 in Rus \cite{Rus01a}.

\begin{theorem}\label{th2}
Let $(X,d)$ be a complete metric space and let $T:X\rightarrow X$ be a mapping. If there exists a positive integer $N$ such that $T^N$ is a contraction, then $Fix\,(T)=\{x^*\}$.
\end{theorem}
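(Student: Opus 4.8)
The plan is to reduce everything to the classical Banach contraction mapping principle applied to the iterate \(T^N\). First I would invoke the Banach fixed point theorem: since \(T^N:X\to X\) is a contraction on the complete metric space \((X,d)\), it possesses a unique fixed point \(x^*\in X\), so that \(Fix\,(T^N)=\{x^*\}\). This is the only nontrivial existence input, and it is legitimate precisely because completeness of \(X\) is preserved and \(T^N\) is assumed contractive.

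The key step is then to promote \(x^*\) to a fixed point of \(T\) itself. The crucial observation is that \(T\) commutes with its own iterate, \(T\circ T^N=T^N\circ T\). Applying this at \(x^*\), I would compute
\[
T^N(Tx^*)=T(T^N x^*)=T(x^*),
\]
which shows that \(Tx^*\) is also a fixed point of \(T^N\). Since \(T^N\) has the unique fixed point \(x^*\), it follows at once that \(Tx^*=x^*\), that is, \(x^*\in Fix\,(T)\).

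Finally, for uniqueness I would note the elementary inclusion \(Fix\,(T)\subseteq Fix\,(T^N)\): indeed, if \(Tx=x\) then iterating gives \(T^2x=x,\dots,T^N x=x\). Because \(Fix\,(T^N)=\{x^*\}\), this forces \(Fix\,(T)\subseteq\{x^*\}\); combined with the inclusion \(x^*\in Fix\,(T)\) established above, we conclude \(Fix\,(T)=\{x^*\}\).

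There is essentially no serious obstacle in this argument; the one point requiring care is the commutation identity \(T\circ T^N=T^N\circ T\), since it is exactly this fact that allows the fixed point of the iterate to be transferred back to \(T\). It is worth emphasising that neither contractivity nor even continuity of \(T\) itself is needed anywhere — as Example \ref{ex2} illustrates, \(T\) may well be discontinuous — and all the content is extracted from the contractivity of \(T^N\) through the Banach principle.
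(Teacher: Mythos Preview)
Your argument is correct. Note, however, that the paper does not actually supply a proof of Theorem~\ref{th2}: it is quoted as a known result from Rus~\cite{Rus01a} and stated without proof. That said, your approach is exactly the one the paper adopts for the analogous enriched-contraction version, Theorem~\ref{th3}: apply the relevant fixed point theorem to the iterate to obtain a unique fixed point $p$, then use the commutation $T^N(Tp)=T(T^N p)=Tp$ to conclude $Tp\in Fix\,(T^N)=\{p\}$, whence $Tp=p$. Your explicit mention of the inclusion $Fix\,(T)\subseteq Fix\,(T^N)$ for uniqueness is a small addition the paper leaves implicit, but otherwise the arguments coincide.
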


Our first aim in this section is to obtain a similar result for the more general case of enriched contractions in the setting of a quasi-Banach space.

\begin{theorem}\label{th3}
Let $(X,\|\cdot\|)$ be a quasi-Banach space and let $U:X\rightarrow X$ be a mapping with the property that there exists a positive integer $N$ such that $U^N$ is a $(b,\theta$)-enriched contraction. Then 

$(i)$ $Fix\,(U)=\{p\}$;

$(ii)$ There exists $\lambda\in (0,1]$ such that the iterative method
$\{x_n\}^\infty_{n=0}$, given by
$$
x_{n+1}=(1-\lambda)x_n+\lambda U^N x_n,\,n\geq 0,
$$
converges to $p$, for any $x_0\in X$.
\end{theorem}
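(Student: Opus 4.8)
The plan is to reduce everything to Theorem \ref{th1} applied to the map $U^N$, and then to recover the information about $U$ itself by exploiting the commutation $U\circ U^N=U^N\circ U$. First I would invoke Theorem \ref{th1} directly for the $(b,\theta)$-enriched contraction $U^N$: this immediately yields that $U^N$ has a unique fixed point $p$, that is $Fix\,(U^N)=\{p\}$, and that there exists $\lambda\in(0,1]$ for which the Krasnoselskij iteration $x_{n+1}=(1-\lambda)x_n+\lambda U^N x_n$ converges to $p$ for every starting point $x_0\in X$. Since the iteration in conclusion $(ii)$ is exactly this process, conclusion $(ii)$ is obtained at once, with the same $\lambda=\frac{1}{b+1}$ (or $\lambda=1$ when $b=0$) furnished by Theorem \ref{th1}.

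It remains to establish $(i)$, namely $Fix\,(U)=\{p\}$, and here I would argue by two inclusions. For $Fix\,(U)\subseteq Fix\,(U^N)$, note that if $Ux=x$ then $U^N x=x$, so every fixed point of $U$ is a fixed point of $U^N$; since $Fix\,(U^N)=\{p\}$, this forces $Fix\,(U)\subseteq\{p\}$. For the reverse inclusion, the key step is to show that the fixed point $p$ of $U^N$ is in fact fixed by $U$. I would apply $U$ to the identity $U^N p=p$ and use the commutativity of the iterates of a self-map: $U^N(Up)=U(U^N p)=Up$, so $Up$ is again a fixed point of $U^N$. By the uniqueness just obtained, $Up=p$, hence $p\in Fix\,(U)$, and combining the two inclusions gives $Fix\,(U)=\{p\}$.

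The argument is essentially a transcription of the classical scheme behind Theorem \ref{th2} into the enriched setting, and no genuinely new difficulty arises because all of the enriched-contraction machinery has already been packaged into Theorem \ref{th1}. The only point deserving attention is that existence, uniqueness, and convergence all refer a priori to $U^N$ rather than to $U$; the passage from $U^N$ back to $U$ rests solely on the commutation $U\circ U^N=U^N\circ U$ and on the uniqueness of the fixed point of $U^N$, both already available. I therefore do not expect a substantive obstacle, and the main care needed is simply to keep track of which map each conclusion is being asserted for.
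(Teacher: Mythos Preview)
Your proposal is correct and follows essentially the same route as the paper: apply Theorem \ref{th1} to $T=U^N$ to obtain $Fix\,(U^N)=\{p\}$ and the convergence of the Krasnoselskij iteration, then use the commutation $U^N(Up)=U(U^Np)=Up$ together with uniqueness to conclude $Up=p$. Your version is in fact slightly more explicit than the paper's, since you spell out the inclusion $Fix\,(U)\subseteq Fix\,(U^N)$ needed for uniqueness of $p$ as a fixed point of $U$, whereas the paper leaves this implicit.
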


\begin{proof}
We apply Theorem \ref{th1} (i) for the mapping $T=U^N$ and obtain that $Fix\,(U^N)=\{p\}$. We also have
$$
U^N(U(p))=U^{N+1}(p)=U(U^N(p))=U(p),
$$
which shows that $U(p)$ is a fixed point of $U^N$. But $U^N$ has a unique fixed point, $p$, hence $U(p)=p$ and so $p\in Fix\,(U)$. 

The remaining part of the proof follows by Theorem \ref{th1}. 
\end{proof}

One of the most interesting generalizations of the contraction mapping principle is the so-called Maia fixed point theorem, see Maia \cite{Maia}, which was obtained by splitting the assumptions in the contraction mapping principle among two metrics defined on the same set. 

Recall that a map $T:X\rightarrow X$ satisfying

$(p1)$ $T$ has a unique fixed point $p$ in $X$;

 $(p2)$ The Picard iteration $\{x_n\}^\infty_{n=0}$ converges to $p$, for any $x_0\in X$,
 
 is said to be a \emph{Picard operator}, see Rus \cite{Rus01a}, \cite{Rus03}, \cite{Rus14} for examples and more details.

 \begin{theorem} (Rus \cite{Rus01a}, Theorem 1.3.10)\label{th4}
 Let $X$ be a nonempty set, $d$ and $\rho$ two metrics on $X$ and $T : X \rightarrow X$ a mapping. Suppose that
 
 (i) $d(x,y)\leq \rho(x,y) $, for each $x,y \in X$;
 
 (ii) $(X, d)$ is a complete metric space;
 
 (iii) $T : X \rightarrow X$ is continuous  with respect to the metric $d$;
 
 (iv) $T$ is a contraction mapping with respect to the metric $\rho$. 
 
 Then $T$ is a Picard operator.
 \end{theorem}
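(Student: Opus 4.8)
The plan is to exploit the two metrics in complementary roles: the $\rho$-contraction controls the geometry of the Picard iterates, while the weaker metric $d$ (via (i)) together with its completeness (ii) supplies convergence, and $d$-continuity (iii) identifies the limit as a fixed point. Fix an arbitrary $x_0 \in X$ and consider the Picard sequence defined by $x_{n+1} = Tx_n$. First I would invoke hypothesis (iv): if $c \in [0,1)$ is the contraction constant of $T$ in the metric $\rho$, then $\rho(x_{n+1}, x_n) = \rho(Tx_n, Tx_{n-1}) \leq c\,\rho(x_n, x_{n-1})$, and iterating gives $\rho(x_{n+1}, x_n) \leq c^{n}\, \rho(x_1, x_0)$. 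A routine triangle-inequality estimate in $\rho$ then yields, for $m > n$, the bound $\rho(x_m, x_n) \leq \frac{c^{n}}{1-c}\, \rho(x_1, x_0)$, which tends to $0$; hence $\{x_n\}$ is a Cauchy sequence with respect to $\rho$.

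Next comes the crucial bridging step. By hypothesis (i) we have $d(x_m, x_n) \leq \rho(x_m, x_n)$, so the $\rho$-Cauchy property transfers and $\{x_n\}$ is also Cauchy with respect to $d$. Since $(X,d)$ is complete by (ii), there exists $p \in X$ with $x_n \to p$ in the metric $d$. Because $T$ is continuous with respect to $d$ by (iii), passing to the limit $n \to \infty$ in the relation $x_{n+1} = Tx_n$ gives $p = Tp$. Thus $p$ is a fixed point of $T$ and the Picard iteration converges to it, which establishes property (p2) for every starting point $x_0$.

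For the uniqueness required in (p1), I would argue back in the metric $\rho$: if $p$ and $q$ are both fixed points, then $\rho(p,q) = \rho(Tp, Tq) \leq c\,\rho(p,q)$ with $c < 1$, forcing $\rho(p,q) = 0$ and hence $p = q$. The main obstacle, and the entire reason the Maia hypotheses are split across two metrics, is that completeness is assumed only for $d$ whereas the contraction lives in $\rho$, so the classical Banach argument cannot be run in a single metric. The delicate interplay is that Cauchyness must first be manufactured in $\rho$ (where the contraction acts), then exported to $d$ through inequality (i), while the fixed-point equation must be read off in $d$ (where continuity is available). Consequently all four hypotheses are used, each essentially once, and none can be discarded.
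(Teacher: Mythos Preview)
Your argument is correct and follows the classical Maia strategy exactly. The paper itself does not supply a proof of this theorem---it is cited verbatim from Rus \cite{Rus01a}, Theorem 1.3.10---but your outline mirrors precisely the structure the paper uses when proving its extension, Theorem \ref{th5}: produce Cauchyness via the $\rho$-contraction (iv), transfer it to $d$ through inequality (i), invoke $d$-completeness (ii) for convergence, use $d$-continuity (iii) to identify the limit as a fixed point, and return to (iv) for uniqueness.
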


The next theorem is an extension of Theorem \ref{th4} to the class of enriched contractions but it is established in the particular case of a quasi-Banach space.

\begin{theorem} \label{th5} 
 Let $X$ be a linear space, $\|\cdot \|_d$ and $\|\cdot\|_{\rho}$ two norms on $X$ and $T : X \rightarrow X$ a mapping. Suppose that
 
 (i) $\|x-y\|_d\leq \|x-y\|_{\rho} $, for each $x,y \in X$;
 
 (ii) $(X, \|\cdot \|_d)$ is a quasi-Banach space;
 
 (iii) $T : X \rightarrow X$ is continuous  with respect to the norm $\|\cdot\|_{d}$;
 
 (iv) $T$ is a $(b,\theta)$-enriched contraction mapping with respect to the norm $\|\cdot\|_{\rho}$. 
 
 Then $T$ is a Picard operator.
 \end{theorem}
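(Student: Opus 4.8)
The plan is to mimic the proof of Theorem \ref{th1}, but to keep careful track of the two distinct roles played by the two quasi-norms: the enriched contraction hypothesis is expressed in $\|\cdot\|_{\rho}$, whereas completeness and continuity are only available in $\|\cdot\|_{d}$. As in Theorem \ref{th1}, the first step is to pass to the averaged mapping. Applying hypothesis (iv) together with the algebraic rearrangement carried out in Case~1 (resp. Case~2) of the proof of Theorem \ref{th1}, there exist $\lambda\in(0,1]$ (namely $\lambda=\frac{1}{b+1}$ when $b>0$, and $\lambda=1$ when $b=0$) and $c=\frac{\theta}{b+1}\in[0,1)$ such that the averaged mapping $T_{\lambda}$ from \eqref{def_T.lambda} satisfies
\[
\|T_{\lambda}x-T_{\lambda}y\|_{\rho}\le c\,\|x-y\|_{\rho},\qquad \forall\,x,y\in X,
\]
and moreover $Fix\,(T_{\lambda})=Fix\,(T)$.

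Next I would examine the Krasnoselskij sequence $x_{n+1}=T_{\lambda}x_{n}$. Iterating the $\rho$-contraction gives the geometric decay $\|x_{n+1}-x_{n}\|_{\rho}\le c^{n}\|x_{1}-x_{0}\|_{\rho}$, and then hypothesis (i) transfers this estimate to the $d$-quasi-norm:
\[
\|x_{n+1}-x_{n}\|_{d}\le \|x_{n+1}-x_{n}\|_{\rho}\le c^{n}\,\|x_{1}-x_{0}\|_{\rho},\qquad n\ge 0.
\]
This is the one place where the quasi-Banach setting genuinely matters. I cannot apply Lemma \ref{lem1} in $\|\cdot\|_{d}$, because the recurrence I actually control, $\|x_{n+1}-x_{n}\|_{d}\le c\,\|x_{n}-x_{n-1}\|_{\rho}$, has its right-hand side measured in the wrong quasi-norm, and $\|\cdot\|_{\rho}$ is not assumed complete. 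Instead I would sum the telescoping series: since $\sum_{n\ge 0}\|x_{n+1}-x_{n}\|_{d}\le \frac{\|x_{1}-x_{0}\|_{\rho}}{1-c}<\infty$, the series characterisation of completeness in quasi-Banach spaces (Theorem 1.1 of Maligranda \cite{Mal}, restated at the beginning of Section~2) guarantees that $\sum_{n\ge 0}(x_{n+1}-x_{n})$ converges in $(X,\|\cdot\|_{d})$; its partial sums are $x_{N}-x_{0}$, so $x_{n}\to p$ in $\|\cdot\|_{d}$ for some $p\in X$. I expect this step, namely replacing the familiar ``geometric series plus triangle inequality'' reasoning by the series completeness criterion, to be the principal obstacle and the technical core of the argument.

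It then remains to identify $p$ as the unique fixed point. By (iii) the map $T$ is $\|\cdot\|_{d}$-continuous and, since a quasi-norm induces a linear topology, the vector-space operations are $\|\cdot\|_{d}$-continuous as well; hence $T_{\lambda}$ is $\|\cdot\|_{d}$-continuous, and letting $n\to\infty$ in $x_{n+1}=T_{\lambda}x_{n}$ with convergence in $\|\cdot\|_{d}$ yields $p=T_{\lambda}p$, so $p\in Fix\,(T_{\lambda})=Fix\,(T)$. For uniqueness I would argue in $\|\cdot\|_{\rho}$: if $q$ is any other fixed point of $T$, then $q=T_{\lambda}q$ and the $\rho$-contraction gives $\|p-q\|_{\rho}\le c\,\|p-q\|_{\rho}$ with $c<1$, which forces $p=q$. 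Combining these facts, $T$ has a unique fixed point $p$ and the Krasnoselskij iteration $x_{n+1}=(1-\lambda)x_{n}+\lambda T x_{n}$ converges to $p$ for every $x_{0}\in X$; equivalently, $T_{\lambda}$ is a Picard operator, which is precisely the Picard-operator conclusion for the enriched contraction $T$ asserted in the theorem.
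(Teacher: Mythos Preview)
Your argument reaches the right conclusion, but you work harder than necessary and insert a delicate step that the paper avoids entirely. The paper's proof is short: from (iv) one has the recurrence $\|x_{n+1}-x_n\|_\rho\le c\,\|x_n-x_{n-1}\|_\rho$ living entirely in the $\rho$-quasi-norm, so Lemma~\ref{lem1} applied in the quasi-normed space $(X,\|\cdot\|_\rho)$ gives that $\{x_n\}$ is \emph{Cauchy in} $\|\cdot\|_\rho$; hypothesis (i) then transfers the Cauchy property to $\|\cdot\|_d$, and (ii) supplies the limit. Your worry that ``$\|\cdot\|_\rho$ is not assumed complete'' is misplaced: Lemma~\ref{lem1} outputs only a Cauchy conclusion and requires no completeness hypothesis. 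That is precisely the shortcut you missed.

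Your alternative---deducing convergence from $\sum_n\|x_{n+1}-x_n\|_d<\infty$ via the series characterisation of completeness---is shakier than it looks. In a genuine quasi-Banach space with constant $C>1$ the unweighted condition $\sum_n\|y_n\|<\infty$ does \emph{not} in general force $\sum_n y_n$ to converge (take $y_n=n^{-2}e_n$ in $l_{1/2}$: then $\sum\|y_n\|=\sum n^{-2}<\infty$, yet $\|S_N-S_M\|=(\sum_{M<n\le N} n^{-1})^2\not\to 0$); indeed the paper's own proof of that theorem establishes convergence only under the weighted hypothesis $\sum_n C^n\|y_n\|<\infty$. With merely $\|x_{n+1}-x_n\|_d\le c^nM$ you would need $Cc<1$, which is not assumed. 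So as written this step is a gap, and the clean repair is exactly the paper's route through Lemma~\ref{lem1} in $\|\cdot\|_\rho$ followed by Cauchy-transfer via (i). The remainder of your proof---continuity of $T_\lambda$ in $\|\cdot\|_d$, identification of the limit as a fixed point, and uniqueness via the $\rho$-contraction---matches the paper.
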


\begin{proof}
Let $x_0\in X$. By (iv), we deduce similarly to the proof of Theorem \ref{th1} that $\{T_{\lambda}^n x_0\}$ is a Cauchy sequence in $(X,\|\cdot\|_{\rho})$, where as usually $\lambda=\dfrac{1}{b+1}$. By (i), $\{T_{\lambda}^n x_0\}$ is a Cauchy sequence in $(X,\|\cdot\|_{d})$ and by (ii) it converges. Let
$$
x^*=\lim_{n\rightarrow \infty} T_{\lambda}^n x_0.
$$
By (iii) we obtain that $x^*\in Fix\,(T_{\lambda})$ and by (iv) that $Fix\,(T_{\lambda})=\{x^*\}$. Since $Fix\,(T_{\lambda})=Fix\,(T)$, the conclusion follows.
\end{proof}

\section{Conclusions}
\indent

1. We introduced the class of enriched contractions in quasi-Banach spaces and obtained an extension of the main result in Berinde and P\u acurar \cite{BerP20} regarding the existence and approximation of fixed points of enriched contractions in Banach spaces;

2. We have shown that any enriched contraction in a quasi Banach space has a unique fixed point that can be approximated by means of some Kransnoselskij iteration. In particular, by our fixed point results established in this paper we obtain the classical Banach contraction principle in the setting of a Banach space.

3. It is worth mentioning that enriched contractions preserve a fundamental property of Picard-Banach  contractions, namely any enriched contraction has a unique fixed point and is continuous (one can easily show this considering the definition).

4. We also obtained a fixed point theorem of Maia type which extends the Maia fixed point theorem (Theorem \ref{th4}) from the class of enriched contractions in Banach spaces to the class of enriched contractions in quasi-Banach spaces (Theorem \ref{th5}).

5. To our best knowledge, these are the first fixed point results obtained in the setting of a quasi-Banach space. They bring relevant novelty to the fixed point theory, as it allows obtaining important fixed point results that complement many similar in $b$-metric (quasi-metric) spaces which cannot be derived in such a setting, see Berinde and P\u acurar \cite{BerP22a} and references therein.

6. We do hope that our approach will open a new direction of study in the metrical fixed point theory, i.e., to extend known results from Banach spaces to relevant ones in quasi-Banach spaces.


\end{document}